\documentclass[12pt,reqno]{amsart}
\usepackage[utf8]{inputenc}
\usepackage{amsthm,amsmath,amsxtra,latexsym,amscd,amssymb,exscale,mathrsfs,mathtools,xparse}
\usepackage[abbrev]{amsrefs}
\setcounter{secnumdepth}{2}
\setcounter{tocdepth}{2}
\usepackage{geometry}
\usepackage{boxedminipage}
\usepackage[bookmarks=true]{hyperref}
\usepackage{bookmark}
\emergencystretch=1em

\setlength\arraycolsep{2pt}

\newtheorem{theorem}{Theorem}[section]
\newtheorem{lemma}[theorem]{Lemma}
\newtheorem{proposition}[theorem]{Proposition}

\newtheorem*{corollary*}{Corollary}

\theoremstyle{definition}
 
\theoremstyle{remark}
\newtheorem{remark}{Remark}[section]
\theoremstyle{remark}

\theoremstyle{remark}
\newtheorem*{remark*}{Remark}
\numberwithin{equation}{section}

\let\div\undefined
\DeclareMathOperator{\div}{div}

\let\Real\undefined
\DeclareMathOperator{\Real}{\mathbb{R}}

\title[Monotonicity formulas for minimal submanifolds]{Monotonicity formulas for minimal submanifolds involving M\"obius transformations}
\author{Doanh Pham}
\address{Beijing International Center for Mathematical Research, Peking University, Beijing 100871, China}
\email{doanhpham@pku.edu.cn}
\subjclass[2020]{53A10}

\calclayout

\begin{document}
	
	\maketitle
	
\begin{abstract}
For a minimal submanifold of the Euclidean space, we prove monotonicity formulas for its (weighted) volume within images of concentric balls under M\"obius transformations.
\end{abstract}

\section{Introduction}

In this paper, we let $B^n(a,r)$ and $S^{n-1}(a,r)$ denote the ball and sphere, respectively, in the Euclidean space $\Real^n$ centered at $a$ with radius $r$. For convenience, we denote $B^n_r = B^n(0,r)$ and $S^{n-1}_r = S^{n-1}(0,r) = \partial B^n_r$.

Minimal submanifolds are important objects in geometry. They are critical points of the area functional and characterized by having vanishing mean curvature. For a $k$-dimensional minimal submanifold $\Sigma$ of $\Real^n$, the classical monotonicity formula (see, for example, \cite{Simon_Lectures on geometric measure theory}) states that the function
$$r \mapsto r^{-k} |\Sigma \cap B^n_r|$$
is monotone increasing as long as $\partial\Sigma \cap B^n_r = \emptyset$. The formula is a useful tool to study singularities of minimal submanifolds and has various geometric implications. In particular, by taking the limit as $r \to 0$, it shows that if $\Sigma$ is a $k$-dimensional minimal submanifold of $B^n_1$ which contains the origin and satisfies $\partial\Sigma \subset S^{n-1}_1$, then $|\Sigma| \geq |B^k_1|$ with equality if and only if $\Sigma$ is totally geodesic.

The following `moving-center' monotonicity formulas for minimal submanifolds of $\Real^n$ were obtained by Zhu \cite{Zhu_JFA2018}, and Naff and Zhu \cite{Naff-Zhu_Monotonicity for minimal in constant cruvature_arXiv2022}:

\begin{theorem}[\cite{Zhu_JFA2018,Naff-Zhu_Monotonicity for minimal in constant cruvature_arXiv2022}] \label{Naff-Zhu Theorem} Fix $a \in B^n_1$ and denote the family of balls
$$E_s = B^n((1-s)a,\, \sqrt{s(1-|a|^2) + s^2|a|^2}) \quad \text{for } s \geq 0.$$
On the half space $\{x \in \Real^n: |a|^2 + 1 - 2\langle a,x \rangle > 0\}$, define the function
$$f(x) = \frac{|x-a|^2}{|a|^2 + 1-2\langle a,x \rangle}.$$
Suppose that $\Sigma$ is a $k$-dimensional minimal submanifold of $E_{\bar{s}}$ with $\partial\Sigma \subset \partial E_{\bar{s}}$ for some $\bar{s} > 0$. For $s \in (0, \bar{s})$, define
\begin{equation*}
Q_A(s) = s^{-\frac{k}{2}}|\Sigma \cap E_s| \quad \text{and} \quad Q_I(s) = s^{-\frac{k}{2}} \int_{\Sigma \cap E_s} \frac{|(x-a)^\top|^2}{|x-a|^2}.
\end{equation*}
Then for $0 < s < t < \bar{s}$, we have volume monotonicity
\begin{equation}\label{eq:Zhu monotonicity}
Q_A(t) - Q_A(s) = \int_{\Sigma \cap E_t \backslash E_s} f^{-\frac{k}{2}} \left( \frac{|(x-a)^\perp|^2 + f^2 |a^\top|^2}{|x-a|^2}\right),
\end{equation}
and weighted monotonicity
\begin{align}\label{eq: Naff-Zhu weighted monotonicity}
Q_I(t) - Q_I(s) = \int_{\Sigma \cap E_t \backslash E_s} f^{-\frac{k-4}{2}} \frac{|a^\top|^2}{|x-a|^2} + \frac{k}{2} \int_{s}^{t} \left(\tau^{-\frac{k+2}{2}} \int_{\Sigma \cap E_{\tau}}\frac{|(x-a)^\perp|^2}{|x-a|^2} \right) d\tau.
\end{align}
In particular, $Q_A$ and $Q_I$ are both monotone increasing on $(0, \bar{s})$. Furthermore, if one of them is constant, then $\Sigma$ is a totally geodesic disk containing $a$ and orthogonal to $a$.
\end{theorem}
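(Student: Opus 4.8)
The plan is to run the classical first-variation-plus-coarea argument, but with the Euclidean radial function $|x|^2$ replaced by $f$ and the position field replaced by the vector field adapted to $f$. First I would record the level-set description $E_s = \{f \le s\}$, equivalently $\partial E_s = \{f = s\}$, which one checks by completing the square in the inequality $|x-a|^2 \le s\,(|a|^2+1-2\langle a,x\rangle)$; this is exactly what makes $f$ play the role of the squared radius. Writing $g := |a|^2 + 1 - 2\langle a,x\rangle$ so that $f = |x-a|^2/g$, a direct computation gives $\nabla f = \tfrac{2}{g}V$ with $V := (x-a) + f\,a$, together with the identities $fg = |x-a|^2$ and $|V|^2 = f\bigl(1-(1-f)|a|^2\bigr)$. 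The latter shows $|\nabla f| = 2\sqrt{f}\,\lambda$ with conformal factor $\lambda = \sqrt{1-(1-f)|a|^2}/g$, which is the Möbius/conformal content: $f = |\Phi|^2$ for the M\"obius map $\Phi$ sending $E_s$ to $B^n_{\sqrt{s}}$ and $a$ to the origin. On a slice $\{f=s\}\cap\Sigma$ the outward unit conormal is $\eta = V^\top/|V^\top|$, so that $\langle V,\eta\rangle = |V^\top|$.

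For the volume statement I would apply the first variation formula for the minimal $\Sigma$ to $X = V$. Since $\div_\Sigma(x-a)=k$ we have $\div_\Sigma V = k + \div_\Sigma(f a)$, and the crucial point is that $\int_{\Sigma\cap E_s}\div_\Sigma(fa)=0$: indeed $\div_\Sigma a = 0$ gives $\int_{\{f=s\}}\langle a,\eta\rangle = 0$ by minimality, and since $f\equiv s$ on the slice, $\int_{\Sigma\cap E_s}\div_\Sigma(fa) = s\int_{\{f=s\}}\langle a,\eta\rangle = 0$. The first variation formula then collapses to $k\,|\Sigma\cap E_s| = \int_{\{f=s\}\cap\Sigma}|V^\top|$. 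Combining this with the coarea identity $\tfrac{d}{ds}|\Sigma\cap E_s| = \int_{\{f=s\}}|\nabla^\Sigma f|^{-1} = \int_{\{f=s\}}\tfrac{g}{2|V^\top|}$ and differentiating $Q_A(s) = s^{-k/2}|\Sigma\cap E_s|$ yields
\begin{equation*}
Q_A'(s) = \tfrac12\, s^{-\frac{k}{2}-1}\int_{\{f=s\}\cap\Sigma}\frac{sg - |V^\top|^2}{|V^\top|}\,d\mathcal{H}^{k-1},
\end{equation*}
where $sg = |x-a|^2$ on the slice. Integrating in $s$ and applying coarea in reverse rewrites $Q_A(t)-Q_A(s)$ as a volume integral over $\Sigma\cap E_t\setminus E_s$.

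The one real obstacle is that this raw integrand is not manifestly nonnegative: expanding $|V^\top|^2 = |(x-a)^\top|^2 + 2f\langle(x-a)^\top,a^\top\rangle + f^2|a^\top|^2$ leaves the cross term $-2f\langle(x-a)^\top,a^\top\rangle$ and the \emph{wrong} sign on $f^2|a^\top|^2$. I would resolve this by the same vanishing used above: the raw integrand differs from the stated one $f^{-k/2}\bigl(|(x-a)^\perp|^2 + f^2|a^\top|^2\bigr)/|x-a|^2$ by $\tfrac{2}{g}f^{-k/2}\langle V^\top,a^\top\rangle$, and since $\langle V^\top,a^\top\rangle/|V^\top| = \langle a,\eta\rangle$, its slice integrals are multiples of $\int_{\{f=s\}}\langle a,\eta\rangle = 0$; hence the correction integrates to zero over $\Sigma\cap E_t\setminus E_s$. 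This produces exactly \eqref{eq:Zhu monotonicity} with a manifestly nonnegative integrand, giving monotonicity of $Q_A$.

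For the weighted quantity I would repeat the scheme with an inversion-type field. The weight is produced by $\div_\Sigma\tfrac{x-a}{|x-a|^2} = \tfrac{1}{|x-a|^2}\bigl(k - 2\,|(x-a)^\top|^2/|x-a|^2\bigr)$, so applying the first variation to a cutoff of $f\,\tfrac{x-a}{|x-a|^2} = \tfrac{x-a}{g}$ (again discarding slice-exact $\langle a,\eta\rangle$-terms) reproduces the structure of $I(s) = \int_{\Sigma\cap E_s}|(x-a)^\top|^2/|x-a|^2$; the extra term $\tfrac{k}{2}\int_s^t(\cdots)\,d\tau$ in \eqref{eq: Naff-Zhu weighted monotonicity} appears precisely because the weight is not constant on slices, so the $s^{-k/2}$ scaling no longer telescopes into a single boundary integral and leaves a genuine bulk contribution. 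Finally, for rigidity, equality forces the nonnegative defect integrands to vanish pointwise: $(x-a)^\perp \equiv 0$ makes $\Sigma$ a cone with vertex $a$, hence (being a smooth minimal cone) a piece of a $k$-plane through $a$, while $a^\top \equiv 0$ forces the constant vector $a$ to be everywhere normal; together these say $\Sigma$ is a totally geodesic disk containing $a$ and orthogonal to $a$, as claimed.
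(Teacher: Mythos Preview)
Your volume-monotonicity argument is correct and follows the same coarea-plus-first-variation template as the paper, but your choice of vector field forces an unnecessary correction step. The paper (following Zhu) takes $X_s = (x-a) - s\,a$ with a \emph{minus} sign, rather than your $V=(x-a)+f\,a$. Since $\nabla^\Sigma f$ is proportional to $(x-a)^\top + s\,a^\top$ on $\{f=s\}$, pairing with $X_s$ produces the difference of squares $|(x-a)^\top|^2 - s^2|a^\top|^2$; the cross term never appears, and after subtracting from $|x-a|^2$ one gets $|(x-a)^\perp|^2 + s^2|a^\top|^2$ with the correct sign immediately. Your detour through $\int_{\{f=s\}}\langle a,\eta\rangle = 0$ is valid but avoidable.

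Your sketch for the weighted formula, however, is too vague to count as a proof. It is not clear how first variation applied to ``a cutoff of $f\,(x-a)/|x-a|^2$'' yields the weight $|(x-a)^\top|^2/|x-a|^2$: the divergence identity you quote carries a factor $1/|x-a|^2$, not the integrand of $Q_I$, and you do not say what the cutoff is or how the terms recombine. The paper's route is far simpler and needs no new vector field: write
\[
\frac{|(x-a)^\top|^2}{|x-a|^2} \;=\; 1 - \frac{|(x-a)^\perp|^2}{|x-a|^2},
\]
so that differentiating $s^{-k/2}\!\int_{\Sigma\cap E_s}\!\frac{|(x-a)^\top|^2}{|x-a|^2}$ reduces to the already-computed derivative of $s^{-k/2}|\Sigma\cap E_s|$ plus a residual $\frac{k}{2}s^{-(k+2)/2}\!\int_{\Sigma\cap E_s}\!\frac{|(x-a)^\perp|^2}{|x-a|^2}$. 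Substituting the boundary identity $k\,|\Sigma\cap E_s| = \int_{\partial E_s}\langle X_s,\eta\rangle$ makes the $|(x-a)^\top|^2$ slice terms cancel exactly, leaving only the $|a^\top|^2$ boundary term (which coarea converts into the $f^{-(k-4)/2}$ integral) and the untouched bulk $\perp$-integral. That is precisely \eqref{eq: Naff-Zhu weighted monotonicity}. Your rigidity paragraph is correct and matches the paper.
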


In \eqref{eq:Zhu monotonicity} and throughout the rest of this paper, we use $X^\top$ to denote the tangential projection of a vector field $X$ to the tangent space $T_x\Sigma$ of $x \in \Sigma$, and $X^\perp$ to denote its orthogonal complement. Theorem \ref{Naff-Zhu Theorem} is a generalization of the classical monotonicity formula and consequently gives the sharp lower bound for volume of minimal submanifolds in the unit ball passing through a prescribed point. Specifically, by choosing $t = 1$ and taking the limit as $s \to 0$ in either \eqref{eq:Zhu monotonicity} or \eqref{eq: Naff-Zhu weighted monotonicity}, one recovers the following result which was previously proved by Alexander and Osserman \cite{Alexander-Osserman_1975} for simply connected surfaces and later by Brendle and Hung \cite{Brendle-Hung_GAFA2017} in full generality.

\begin{corollary*}[\cite{Brendle-Hung_GAFA2017}]
Suppose that $\Sigma$ is a $k$-dimensional minimal submanifold of $B^n_1$ which contains a point $a \in B^n_1$ and satisfies $\partial\Sigma \subset S^{n-1}_1$. Then $|\Sigma| \geq |B^k_1|(1 - |a|^2)^{\frac{k}{2}}$ with equality if and only if $\Sigma$ is totally geodesic and orthogonal to $a$.
\end{corollary*}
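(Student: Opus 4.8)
The plan is to deduce the corollary directly from the volume monotonicity formula (\ref{eq:Zhu monotonicity}) of Theorem \ref{Naff-Zhu Theorem}, applied with endpoint value $\bar s = 1$. The first thing I would record is that the family $E_s$ degenerates correctly at the two ends of $(0,1]$: at $s = 1$ the center $(1-s)a$ becomes the origin and the radius $\sqrt{s(1-|a|^2)+s^2|a|^2}$ becomes $1$, so $E_1 = B^n_1$; while as $s \to 0^+$ the center tends to $a$ and the radius tends to $0$, so $E_s$ shrinks to the single point $a$. Since $|a| < 1$, a one-line check ($s^2|a|^2 < s(1-|a|^2)+s^2|a|^2$) shows that $a$ lies in the interior of $E_s$ for every $s \in (0,1]$, so $a$ is an interior point of $\Sigma \cap E_s$ and the hypotheses of Theorem \ref{Naff-Zhu Theorem} are met (taking $t \uparrow 1$ and using $\partial\Sigma \subset S^{n-1}_1 = \partial E_1$).

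Next I would evaluate $Q_A$ at the two endpoints. At $t = 1$ one has $Q_A(1) = |\Sigma \cap E_1| = |\Sigma|$ because $\Sigma \subset B^n_1$. The substantive computation is the limit as $s \to 0^+$. Writing $\rho(s) = \sqrt{s(1-|a|^2)+s^2|a|^2}$ for the radius and $c(s) = (1-s)a$ for the center, I would observe that $\rho(s) = \sqrt{s}\,\sqrt{1-|a|^2}\,(1+o(1))$, while the offset $|c(s)-a| = s|a|$ is of order $s = o(\rho(s))$ and hence negligible relative to the radius. Because $a$ is a smooth interior point of $\Sigma$, the density of $\Sigma$ at $a$ equals $1$, so $|\Sigma \cap E_s| = |\Sigma \cap B^n(c(s),\rho(s))| = |B^k_1|\,\rho(s)^k(1+o(1))$ as $s \to 0^+$. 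Dividing by $s^{k/2}$ and using $\rho(s)^k/s^{k/2} \to (1-|a|^2)^{k/2}$ yields
\begin{equation*}
\lim_{s\to 0^+} Q_A(s) = |B^k_1|\,(1-|a|^2)^{\frac{k}{2}}.
\end{equation*}
This asymptotic analysis — controlling the moving center and invoking regularity of $\Sigma$ at $a$ to pin down the density — is the main technical step; the inequality then follows by bookkeeping. Since the integrand on the right of (\ref{eq:Zhu monotonicity}) is manifestly nonnegative, $Q_A$ is monotone increasing on $(0,1)$, whence
\begin{equation*}
|\Sigma| = Q_A(1) \;\ge\; \lim_{s\to 0^+} Q_A(s) = |B^k_1|\,(1-|a|^2)^{\frac{k}{2}}.
\end{equation*}

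Finally, for the equality case I would argue as follows. If $|\Sigma| = |B^k_1|(1-|a|^2)^{k/2}$, then the two endpoint values of $Q_A$ coincide, and monotonicity forces $Q_A$ to be constant on $(0,1)$; the rigidity clause of Theorem \ref{Naff-Zhu Theorem} then gives that $\Sigma$ is a totally geodesic disk containing $a$ and orthogonal to $a$. For the converse, I would verify equality by direct computation: choosing coordinates so that $\Sigma = a + V$ for a $k$-dimensional subspace $V$ with $a \perp V$, every $x = a + v$ with $v \in V$ satisfies $|x|^2 = |a|^2 + |v|^2$, so $\Sigma \cap B^n_1$ is a flat $k$-disk of radius $\sqrt{1-|a|^2}$ and thus has $k$-volume exactly $|B^k_1|(1-|a|^2)^{k/2}$. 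The only place where genuine care is required is the $s \to 0^+$ limit above, where one must justify that the density asymptotics hold uniformly enough to pass to the limit; everything else is a formal consequence of Theorem \ref{Naff-Zhu Theorem}.
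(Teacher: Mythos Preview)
Your proposal is correct and follows precisely the route the paper indicates: the paper does not give a detailed proof of the corollary but simply says it is recovered from Theorem~\ref{Naff-Zhu Theorem} by choosing $t=1$ and letting $s\to 0$ in either \eqref{eq:Zhu monotonicity} or \eqref{eq: Naff-Zhu weighted monotonicity}. You have supplied exactly the details this sketch calls for --- the identification $E_1=B^n_1$, the asymptotic $\rho(s)=\sqrt{s}\sqrt{1-|a|^2}(1+o(1))$ together with the observation that the center drift $s|a|$ is negligible, and the density argument at the smooth point $a$ --- and your treatment of the equality case via the rigidity clause is the intended one.
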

	
See \cite{Naff-Zhu_Prescribed point area estimate_arXiv2022, Naff-Zhu_Monotonicity for minimal in constant cruvature_arXiv2022} for related monotonicity formulas and prescribed point volume estimates for minimal submanifolds in space forms. In this paper, we formulate and prove  monotonicity formulas for minimal submanifolds of $\Real^n$ which involve M\"obius transformations. Let $\overline{\Real^n} = \Real^n \cup \{\infty\}$ denote the one-point compactification of $\Real^n$. By definition, a M\"obius transformation on $\overline{\Real^n}$ is a finite composition of reflections in spheres or planes. These transformations are characterized in the following theorem.

\begin{theorem}[see e.g. \cite{Beardon_Geometry of Discrete}]\label{Theorem: characterize Mobius} Let $\phi$ be a M\"obius transformation on $\overline{\Real^n}$. If $\phi(\infty) = \infty$, then $\phi$ is a Euclidean similarity. If $\phi(\infty) \ne \infty$, then there exists a unique reflection $\sigma$ in a sphere $S$ (necessarily centered at $\phi^{-1}(\infty)$) and a unique Euclidean isometry $\psi$ such that $\phi = \psi\sigma$.
\end{theorem}
In the statement of Theorem \ref{Theorem: characterize Mobius}, the sphere $S$ is called the \textit{isometric sphere} for the M\"obius transformation $\phi$. The main result of this paper is the following statement:

\begin{theorem}\label{Theorem: main result}
Let $\phi$ be a M\"obius transformation on $\overline{\Real^n}$ such that $b \coloneqq \phi^{-1}(\infty) \in \Real^n \backslash \{0\}$. Let $S^{n-1}(b, R)$ be its isometric sphere, $\sigma$ be the reflection in $S^{n-1}(b, R)$, and $a = \sigma(0)$. In addition, fix $\bar{r} \in (0, |b|)$ and let $f$ be the function on $\phi(B^n_{\bar{r}})$ defined by
$$f(x) = \frac{R^2|\phi^{-1}(x)|^2}{|b|^2 - |\phi^{-1}(x)|^2} \quad \text{for } x \in \phi(B^n_{\bar{r}}).$$
Suppose that $\Sigma$ is a $k$-dimensional minimal submanifold of $\phi(B^n_{\bar{r}})$ for some $0 < \bar{r} < |b|$ with $\partial\Sigma \subset \partial\phi(B^n_{\bar{r}})$. For $r \in (0, \bar{r})$, define
\begin{equation*}
J(r) = \left(\frac{|b|^2 - r^2}{r^2}\right)^\frac{k}{2} |\Sigma \cap \phi(B^n_r)| \;\; \text{and} \;\; I(r) = \left(\frac{|b|^2 - r^2}{r^2}\right)^\frac{k}{2} \int_{\Sigma \cap \phi(B^n_r)} \frac{|(x-\phi(0))^\top|^2}{|x-\phi(0)|^2}.
\end{equation*}
Then, for $0 < r < q < \bar{r}$, we have volume monotonicity
\begin{align}\label{eq: main result volume monotonicity}
J(q) - J(r) = R^k \int_{\Sigma \cap \phi(B^n_q) \backslash \phi(B^n_r)} f^{-\frac{k}{2}} \left( \frac{|b|^4|(x-\phi(0))^\perp|^2 + f^2|(\phi(\infty) - \phi(a))^\top|^2}{|b|^4|x-\phi(0)|^2}\right),
\end{align}
and weighted monotonicity
\begin{align}\label{eq: main result weighted monotonicity}
I(q) - I(r) &= R^k \int_{\Sigma \cap \phi(B^n_q) \backslash \phi(B^n_r)} f^{-\frac{k-4}{2}} \frac{|(\phi(\infty) - \phi(a))^\top|^2}{|b|^4|x-\phi(0)|^2} \nonumber \\
& \qquad \quad + k|b|^2 \int_{r}^{q} \left( \frac{(|b|^2 - \rho^2)^{\frac{k-2}{2}}}{\rho^{k+1}} \int_{\Sigma \cap \phi(B^n_{\rho})} \frac{|(x-\phi(0))^\perp|^2}{|x-\phi(0)|^2} \right) d\rho.
\end{align}
In particular, $I$ and $J$ are both monotone increasing on $(0, \bar{r})$. Moreover, if one of them is constant, then $\Sigma$ is a totally geodesic disk containing $\phi(0)$ and orthogonal to $\phi(\infty) - \phi(a)$.
\end{theorem}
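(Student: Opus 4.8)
The first step is to linearize the geometry of $\phi$. Since every M\"obius transformation is a reflection in its isometric sphere followed by a Euclidean isometry, I may write $\phi=\psi\circ\sigma$ with $\psi$ an isometry; as the volume $|\Sigma\cap\phi(B^n_r)|$, the splittings $X^\top,X^\perp$, and the vectors $\phi(0)$ and $\phi(\infty)-\phi(a)=\psi(b)-\psi(0)$ all transform equivariantly under $\psi$, and as the two sides of (\ref{eq: main result volume monotonicity}) and (\ref{eq: main result weighted monotonicity}) scale identically under $x\mapsto x/|b|$, it suffices to treat the case $\phi=\sigma$ and $|b|=1$. Here I record $\phi(0)=a=\sigma(0)=(1-R^2)b$, $\phi(\infty)=b$, $\phi(a)=0$, hence $\phi(\infty)-\phi(a)=b$, together with the inversion identity $|\sigma(x)|^2=|x-a|^2/|x-b|^2$. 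Because a M\"obius transformation carries balls to balls, $\phi(B^n_r)$ is the genuine Euclidean ball $B^n(c(r),\rho(r))$ with $c(r)=(1-R^2/(1-r^2))\,b$ and $\rho(r)=R^2r/(1-r^2)$, whose bounding spheres are the level sets of $u(x):=|\phi^{-1}(x)|=|\sigma(x)|$, with $\{u<r\}=\phi(B^n_r)$. A short computation then gives the clean relation $a-c(r)=f\,b$ on $\partial\phi(B^n_r)$, i.e. $(x-c(r))^\top=(x-\phi(0))^\top+f\,(\phi(\infty)-\phi(a))^\top$.

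With this description the proof follows the moving-center scheme behind Theorem~\ref{Naff-Zhu Theorem}. Write $J(r)=N(r)\,V(r)$ with $N(r)=((1-r^2)/r^2)^{k/2}$ and $V(r)=|\Sigma\cap\phi(B^n_r)|$. Fixing $r$ and applying the divergence theorem to the position field $X=x-c(r)$ on the minimal $\Sigma$, so that $\div_\Sigma X=k$ and the $\vec H$-term drops, gives $k\,V(r)=\int_{\Sigma\cap\partial\phi(B^n_r)}\langle x-c(r),\nu\rangle=\int_{\Sigma\cap\partial\phi(B^n_r)}|(x-c(r))^\top|$, since the outward conormal to the Euclidean sphere $\partial\phi(B^n_r)$ is $\nu=(x-c(r))^\top/|(x-c(r))^\top|$. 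The coarea formula gives $V'(r)=\int_{\Sigma\cap\partial\phi(B^n_r)}|\nabla^\Sigma u|^{-1}$. Combining, $J'(r)$ becomes a single integral over $\Sigma\cap\partial\phi(B^n_r)$ of $N'(r)\,k^{-1}|(x-c(r))^\top|+N(r)\,|\nabla^\Sigma u|^{-1}$, and integrating in $r$ against the coarea factor converts this flux into the annular integral appearing in (\ref{eq: main result volume monotonicity}).

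It then remains to identify the integrand. Using $|\nabla u|=R^2/|x-b|^2$, the logarithmic derivative $N'(r)/N(r)=-k\,[\,r(1-r^2)\,]^{-1}$, and the collinearity relation $(x-c(r))^\top=(x-\phi(0))^\top+f\,(\phi(\infty)-\phi(a))^\top$, the scalar $|\nabla^\Sigma u|\,k^{-1}N'(r)\,|(x-c(r))^\top|$ must reduce to $N(r)\big(|(x-\phi(0))^\top|^2-f^2|(\phi(\infty)-\phi(a))^\top|^2\big)/|x-\phi(0)|^2$; adding $N(r)=R^kf^{-k/2}$ and using $|x-\phi(0)|^2=|(x-\phi(0))^\top|^2+|(x-\phi(0))^\perp|^2$ then yields precisely the bracket in (\ref{eq: main result volume monotonicity}) (with the factors $R^k$, $|b|^4$ reinstated upon undoing the normalization). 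For the weighted statement I would run the same argument with the angle weight $|(x-\phi(0))^\top|^2/|x-\phi(0)|^2$ inserted in $V$; there the relevant field is no longer divergence-$k$, and the failure of its tangential divergence to be a pure multiple of the weight produces exactly the additional nonnegative bulk term $k|b|^2\int_r^q(\cdots)\,d\rho$ of (\ref{eq: main result weighted monotonicity}).

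Finally, the equality case is immediate once the formulas hold: constancy of $J$ (or of $I$) forces the nonnegative annular integrand to vanish identically, hence $(x-\phi(0))^\perp\equiv 0$ and $(\phi(\infty)-\phi(a))^\top\equiv 0$ on $\Sigma$; the first condition makes $\Sigma$ invariant under dilations centered at $\phi(0)$ and, being minimal and regular, a $k$-plane through $\phi(0)$, while the second forces this plane to be orthogonal to $\phi(\infty)-\phi(a)$. I expect the genuine difficulty to lie entirely in the middle two paragraphs: the explicit computation of $|\nabla^\Sigma u|$ and the purely algebraic reduction of the three tangential projections $(x-c(r))^\top$, $(x-\phi(0))^\top$ and $(\phi(\infty)-\phi(a))^\top$ to the stated integrand with its correct conformal factors, and, in the weighted case, the verification that the surplus term coming from the weight is exactly the claimed nonnegative double integral rather than some other nonnegative expression.
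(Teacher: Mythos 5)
Your global architecture is the same as the paper's: factor $\phi=\psi\circ\sigma$ via Theorem \ref{Theorem: characterize Mobius} and reduce to the reflection case, identify $\phi(B^n_r)$ with the Euclidean balls of Lemma \ref{Lemma: images of balls under reflections}, and run a coarea--divergence argument along these level sets. The normalization $|b|=1$, the relation $a-c(r)=f\,b$ on $\partial\phi(B^n_r)$, and the rigidity discussion are all fine. The gap is in the central computation. You apply the divergence theorem to the \emph{radial} field $X=x-c(r)$, so your boundary density is $\langle X,\nu\rangle=|(x-c(r))^\top|=|(x-c)^\top|^2/|(x-c)^\top|$, i.e.\ the \emph{perfect square} $|(x-a)^\top+f\,b^\top|^2$ divided by $|(x-c)^\top|$. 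But the pointwise identity your scheme needs, namely
$$\frac{N'(r)}{k\,N(r)}\,\big|(x-c(r))^\top\big|\,\big|\nabla^\Sigma u\big|\;=\;\frac{f^2|b^\top|^2-|(x-a)^\top|^2}{|x-a|^2}\qquad\text{on }\Sigma\cap\partial\phi(B^n_r),$$
requires the \emph{difference} of squares on the right (your written target even has this difference with the wrong sign, though the subsequent "add $N$" step only closes with the sign above). The left side is a point-dependent scalar times $|(x-a)^\top+fb^\top|^2$, the right side is that scalar times $-\langle (x-a)^\top+fb^\top,\,(x-a)^\top-fb^\top\rangle$; they differ by the cross term $2f\langle (x-a)^\top+fb^\top,\,b^\top\rangle$, which does \emph{not} vanish pointwise (test a fixed $x$ with $T_x\Sigma\perp b$ versus $T_x\Sigma\perp(x-a)$: the putative constant of proportionality would have to be both $+1$ and $-1$ times the same nonzero number). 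So the "purely algebraic reduction of the three tangential projections" that you defer to the end is false as stated, and without it the coarea conversion yields a bulk integrand that is neither the bracket in (\ref{eq: main result volume monotonicity}) nor manifestly nonnegative.

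There are two ways to close this. The paper's way is to replace $x-c(r)=x-a+\frac{f}{|b|^2}b$ by the \emph{conjugate} field $X_s=x-a-\frac{s}{|b|^2}b$ (base point reflected across $a$): it still satisfies $\div_\Sigma X_s=k$, while its pairing with the conormal, which is proportional to $(x-a)^\top+\frac{s}{|b|^2}b^\top$, produces exactly the difference of squares $|(x-a)^\top|^2-\frac{s^2}{|b|^4}|b^\top|^2$. Alternatively, you may keep $x-c(r)$ and discard the offending cross term by one extra application of the divergence theorem, using $\int_{\Sigma\cap\partial\phi(B^n_r)}\langle b,\nu\rangle=\int_{\Sigma\cap\phi(B^n_r)}\div_\Sigma b=0$ since $b$ is constant and $\Sigma$ is minimal; but this step must be said, as it is exactly where the argument currently breaks. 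Finally, your treatment of the weighted formula (\ref{eq: main result weighted monotonicity}) is a one-sentence heuristic: the paper derives it from the same $X_s$ identity together with the splitting $|(x-a)^\top|^2=|x-a|^2-|(x-a)^\perp|^2$ under the integral, which is what produces the extra double integral; your sketch does not identify this mechanism.
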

Note that under the assumptions of Theorem \ref{Theorem: main result}, if $\Sigma_0$ is a $k$-dimensional flat plane containing $\phi(0)$ and orthogonal to $\phi(\infty) - \phi(a)$, then $\Sigma_0 \cap \phi(B^n_r)$ is a flat disk of radius $\frac{R^2r}{|b|\sqrt{|b|^2 - r^2}}$ for $0 < r < |b|$ (see Lemma \ref{Lemma: images of balls under reflections} and Theorem \ref{Theorem: characterize Mobius}).\\

Theorem \ref{Naff-Zhu Theorem} is a special case of Theorem \ref{Theorem: main result} when the M\"obius transformation is the reflection in a sphere orthogonal to the unit sphere. More specifically, for $a \in B^n_1 \backslash \{0\}$, we denote $a^* = \frac{a}{|a|^2}$ and consider the reflection $\sigma_a$ in the sphere $S^{n-1}(a^*, \sqrt{|a^*|^2 - 1})$. Then $\sigma_a(0) = a$ and the images of balls centered at the origin under this reflection satisfy (see Section \ref{Section: Preliminaries})
$$\sigma_a(B^n_r) = B^n\left(\frac{(1-r^2)a}{1-r^2|a|^2}\, ,\, \frac{(1-|a|^2)r}{1-r^2|a|^2}\right) \quad \text{for}\;\; 0 < r < \frac{1}{|a|}.$$
In other words, for $r \in (0, \frac{1}{|a|})$, we have
$$\sigma_a(B^n_r) = B^n((1-s)a,\, \sqrt{s(1-|a|^2) + s^2|a|^2}) \quad \text{for}\;\; s = \frac{(1-|a|^2)r^2}{1-r^2|a|^2}.$$
In addition, since $\sigma_a^{-1} = \sigma_a$, it is straightforward to check (or see the proof of Proposition \ref{Proposition: main result for reflections}) that if $\phi = \sigma_a$, then the functions $f$ in the statements of Theorem \ref{Theorem: main result} and Theorem \ref{Naff-Zhu Theorem} are indeed the same one.

Alternatively, one also recovers Theorem \ref{Naff-Zhu Theorem} from Theorem \ref{Theorem: main result} by considering the M\"obius transformation $\varphi_a$, $a \in B^n_1$, given by
$$\varphi_a(x) = (a + (1 - |a|^2)(a - x)^*)^* = \frac{|x-a|^2 a - (1-|a|^2)(x-a)}{1-2\langle a,x \rangle + |a|^2|x|^2}.$$
See, for example, \cite{Stoll_Harmonic and subharmonic function theory} for more details on $\varphi_a$.

\section{Preliminaries}\label{Section: Preliminaries}
In this section, we recall some useful properties of sphere reflections. For further details, we refer readers to \cite{Beardon_Geometry of Discrete}. Let $\overline{\Real^n} = \Real^n \cup \{\infty\}$ denote the one-point compactification of $\Real^n$. For $b \in \Real^n$ and $R > 0$, let $\sigma : \overline{\Real^n} \to \overline{\Real^n}$ be the reflection in the sphere $S^{n-1}(b,R)$, i.e., the map defined by
$$\sigma(b) = \infty, \quad \sigma(\infty) = b \quad \text{and} \quad \sigma(x) = b + \frac{R^2}{|x-b|^2} (x-b) \;\; \text{for}\;\; x \in \Real^n \backslash \{b\}.$$
It is a bijection on $\overline{\Real^n}$ which satisfies $\sigma(\sigma(x)) = x$. Moreover, by straightforward computation (or see \cite{Beardon_Geometry of Discrete}), for $x,y \in \Real^n \backslash \{b\}$, we have
\begin{equation}\label{eq:chord equation for reflections}
|\sigma(x) - b| = \frac{R^2}{|x-b|} \quad \text{and} \quad |\sigma(x) - \sigma(y)| = \frac{R^2|x-y|}{|x-b||y-b|}.
\end{equation}
In particular, assuming $b\ne 0$, by the first equation of \eqref{eq:chord equation for reflections}, we have $|\sigma(0)-b| = R^2/|b|$. Consequently, we choose $y = \sigma(0)$ in the second equation of \eqref{eq:chord equation for reflections} to obtain
\begin{equation}\label{eq: |sigma(x)|}
|\sigma(x)| = \frac{|b||x-\sigma(0)|}{|x-b|} \quad \text{for } x \in \Real^n \backslash \{b\} \text{ and assuming } b \ne 0.
\end{equation}
Explicit description of images of balls centered at the origin under sphere reflections is given in the following lemma.
\begin{lemma}\label{Lemma: images of balls under reflections}
Let $R > 0$, $b \in \Real^n \backslash \{0\}$ and $\sigma$ be the reflection in the sphere $S^{n-1}(b,R)$. Then
$$\sigma(S^{n-1}_r) =\left\{ \begin{array}{ll}
S^{n-1}\left( \dfrac{|b|^2 - R^2 -r^2}{|b|^2 - r^2}b, \; \dfrac{R^2r}{||b|^2 - r^2|}\right) & \quad \text{if }  r \ne |b| \smallskip \\
\{x \in \Real^n: 2|b|^2 - R^2 - 2\langle b,x \rangle = 0\} &\quad \text{if } r = |b|.
\end{array} \right.$$
\end{lemma}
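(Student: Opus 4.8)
The plan is to characterize the image set $\sigma(S^{n-1}_r)$ pointwise, using that $\sigma$ is an involution together with the distance formula \eqref{eq: |sigma(x)|}. Since $\sigma(\sigma(x)) = x$ and $\sigma$ is a bijection of $\overline{\Real}^n$, a point $y \in \Real^n \setminus \{b\}$ lies in $\sigma(S^{n-1}_r)$ if and only if $\sigma(y) \in S^{n-1}_r$, i.e. $|\sigma(y)| = r$. First I would record that $\sigma(0) = b - \frac{R^2}{|b|^2}b = \frac{|b|^2 - R^2}{|b|^2}\,b$, a scalar multiple of $b$, and then apply \eqref{eq: |sigma(x)|} (with $x$ replaced by $y$) to rewrite the condition $|\sigma(y)| = r$ as
\[
|b|^2\,|y - \sigma(0)|^2 = r^2\,|y - b|^2 .
\]
This is an Apollonius-type equation, so the entire lemma reduces to expanding it and reading off the locus.

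Second, I would expand both sides and collect terms, obtaining
\[
(|b|^2 - r^2)\,|y|^2 - 2\big\langle y,\, |b|^2 \sigma(0) - r^2 b\big\rangle + \big(|b|^2|\sigma(0)|^2 - r^2|b|^2\big) = 0 .
\]
When $r \ne |b|$ the coefficient of $|y|^2$ is nonzero, so dividing through and completing the square exhibits a sphere; substituting $|b|^2\sigma(0) = (|b|^2 - R^2)b$ gives the center $\frac{|b|^2 - R^2 - r^2}{|b|^2 - r^2}\,b$, and simplifying the constant term (the radius-squared collapses, after a pleasant cancellation, to $\frac{r^2 R^4}{(|b|^2-r^2)^2}$) yields the radius $\frac{R^2 r}{\,||b|^2 - r^2|\,}$. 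When $r = |b|$ the quadratic terms cancel and the equation becomes linear in $y$; dividing by $|b|^2$ and inserting $\sigma(0) - b = -\frac{R^2}{|b|^2}b$ and $|\sigma(0)|^2 = \frac{(|b|^2-R^2)^2}{|b|^2}$ collapses it to $2|b|^2 - R^2 - 2\langle b, y\rangle = 0$, the stated hyperplane.

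I expect no serious obstacle here; the substance is the bookkeeping in the two expansions and checking that the constant term simplifies exactly to the claimed radius. The only points requiring care concern the degenerate locus: one should note that the center of inversion $b$ itself is never in $\sigma(S^{n-1}_r)$ (as $\sigma(b) = \infty$), consistent with the computed sphere/plane not passing through $b$; and that in the case $r = |b|$ the sphere $S^{n-1}_{|b|}$ contains $b$, so its image in $\overline{\Real}^n$ is the displayed hyperplane together with $\infty = \sigma(b)$, of which the lemma records the finite part. As an independent sanity check, I would observe that the rotational symmetry of the configuration about the line through $0$ and $b$ forces the image to be a sphere centered on this line (or a hyperplane orthogonal to it), so that the center and radius can alternatively be recovered by tracking only the two axis points $\pm r\,b/|b|$ of $S^{n-1}_r$ and computing $\sigma(\pm r\,b/|b|)$ directly.
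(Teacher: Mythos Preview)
Your argument is correct and follows essentially the same route as the paper: both use the involution property together with the distance formula \eqref{eq: |sigma(x)|} to reduce membership in $\sigma(S^{n-1}_r)$ to the Apollonius condition $|b|\,|y-\sigma(0)| = r\,|y-b|$. The paper then simply cites the Apollonius circle theorem, whereas you carry out the expansion and completion of the square explicitly; your computation of the center, radius, and degenerate hyperplane is accurate.
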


\begin{proof}
We first denote $a = \sigma(0) = \frac{|b|^2 - R^2}{|b|^2}b$. Then, we replace $x$ by $\sigma(x)$ in \eqref{eq: |sigma(x)|} and use the fact that $\sigma(\sigma(x)) = x$ to get
$$\frac{|\sigma(x) - a|}{|\sigma(x) - b|} = \frac{|x|}{|b|} = \frac{r}{|b|} \quad \text{when}\;\, |x| = r.$$
The conclusion follows from a result in elementary geometry (known as Apollonius circle theorem).
\end{proof}

\begin{remark}\label{Remark: foliation of image of balls under reflection}
From Lemma \ref{Lemma: images of balls under reflections}, it is clear that if $\sigma$ is a reflection in $S^{n-1}(b,R)$ for $b \ne 0$, then the balls in the collection $\{\sigma(B^n_r), 0 < r < |b|\}$ foliate the half-space $\{x \in \Real^n: 2|b|^2 - R^2 - 2\langle b,x \rangle > 0\}$.
\end{remark}

\section{Proof of monotonicity formulas}

We divide the proof of Theorem \ref{Theorem: main result} into two steps. In the first step, we prove it under the assumption that the M\"obius transformation in the statement is a reflection. In the second step, we combine the result obtained in the first step and Theorem \ref{Theorem: characterize Mobius} to prove the main result in its full generality.
	
\subsection{The case of reflections}
In this subsection, we prove Theorem \ref{Theorem: main result} under the assumption that the M\"obius transformation in the statement is a reflection. More specifically, we prove the following statement:

\begin{proposition}\label{Proposition: main result for reflections}
Let $R > 0$, $b \in \Real^n \backslash \{0\}$ and $\sigma$ be the reflection in $S^{n-1}(b,R)$. Denote $a = \sigma(0)$. On the half-space $\{x \in \Real^n: 2|b|^2 - R^2 - 2\langle b,x \rangle > 0\}$, define the function
$$f(x) = \frac{R^2|\sigma(x)|^2}{|b|^2 - |\sigma(x)|^2} = \frac{|b|^2|x-a|^2}{2|b|^2 - R^2 - 2\langle b,x \rangle}.$$
Suppose that $\Sigma$ is a $k$-dimensional minimal submanifold of $\sigma(B^{n}_{\bar{r}})$ with $\partial \Sigma \subset \partial \sigma(B^{n}_{\bar{r}}) = \sigma(S^{n-1}_{\bar{r}})$ for some $0 < \bar{r} < |b|$. For $r \in (0, \bar{r})$, define
\begin{equation*}
J(r) = \left(\frac{|b|^2 - r^2}{r^2}\right)^\frac{k}{2} |\Sigma \cap \sigma(B^n_r)| \;\; \text{and} \;\; I(r) = \left(\frac{|b|^2 - r^2}{r^2}\right)^\frac{k}{2} \int_{\Sigma \cap \sigma(B^n_r)} \frac{|(x-a)^\top|^2}{|x-a|^2}.
\end{equation*} Then, for $0 < r < q < \bar{r}$, we have volume monotonicity
\begin{align}\label{eq: volume monotonicity for reflection}
J(q) - J(r) = R^k \int_{\Sigma \cap \sigma(B^n_q) \backslash \sigma(B^n_r)} f^{-\frac{k}{2}} \left( \frac{|b|^4|(x-a)^\perp|^2 + f^2|b^\top|^2}{|b|^4|x-a|^2}\right).
\end{align}
and weighted monotonicity
\begin{align}\label{eq: weighted monotonicity for reflection}
I(q) - I(r) &= R^k \int_{\Sigma \cap \sigma(B^n_q) \backslash \sigma(B^n_r)} f^{-\frac{k-4}{2}} \frac{|b^\top|^2}{|b|^4|x-a|^2} \nonumber \\
& \qquad \quad + k|b|^2 \int_{r}^{q} \left( \frac{(|b|^2 - \rho^2)^{\frac{k-2}{2}}}{\rho^{k+1}} \int_{\Sigma \cap \sigma(B^n_{\rho})} \frac{|(x-a)^\perp|^2}{|x-a|^2} \right) d\rho.
\end{align}
In particular, $I$ and $J$ are both monotone increasing on $(0, \bar{r})$. Moreover, if one of them is constant, then $\Sigma$ is a totally geodesic disk containing $a$ and orthogonal to $b$.
\end{proposition}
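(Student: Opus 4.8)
The plan is to foliate $\sigma(B^n_{\bar r})$ by the level sets of $f$ and to run the classical monotonicity argument inside this foliation. Writing $\rho(x)=|\sigma(x)|$, the relations in (\ref{eq: |sigma(x)|}) give $\rho\equiv r$ on $\sigma(S^{n-1}_r)$, hence $f\equiv g(r):=R^2r^2/(|b|^2-r^2)$ there, while Lemma \ref{Lemma: images of balls under reflections} and Remark \ref{Remark: foliation of image of balls under reflection} identify $\sigma(B^n_r)=\{f<g(r)\}$. Since $c:=g(r)$ is increasing in $r$ and $\big((|b|^2-r^2)/r^2\big)^{k/2}=R^kc^{-k/2}$ on the corresponding slice, we have $J(r)=R^kc^{-k/2}V(c)$ with $V(c):=|\Sigma\cap\{f<c\}|$. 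It therefore suffices to show that $\Theta(c):=R^kc^{-k/2}V(c)$ is nondecreasing and to compute $\Theta'$.

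The computation rests on two inputs. Differentiating $f$ gives $\nabla f=\tfrac{2f}{|x-a|^2}\,v$ with $v:=(x-a)+\tfrac{f}{|b|^2}b$, so that $|(\nabla f)^\top|=\tfrac{2f}{|x-a|^2}|v^\top|$ and the outward unit conormal of $\Sigma\cap\{f<c\}$ along $\{f=c\}$ is $v^\top/|v^\top|$. Because $\Sigma$ is minimal and $\partial\Sigma\subset\{f=g(\bar r)\}$ lies outside $\{f<c\}$, the first variation formula reads $\int_{\Sigma\cap\{f<c\}}\div_\Sigma X=\int_{\Sigma\cap\{f=c\}}\langle X,v^\top\rangle/|v^\top|$ for every ambient field $X$. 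I would apply it to the position field $X=x-a$ (using $\div_\Sigma(x-a)=k$) to write $kV(c)$ as a boundary integral, and to the constant field $X=b$ (using $\div_\Sigma b=0$) to obtain $\int_{\{f=c\}}\langle b,v^\top\rangle/|v^\top|=0$. Combined with the coarea identity $cV'(c)=\tfrac12\int_{\{f=c\}}|x-a|^2/|v^\top|$, these yield
$$\Theta'(c)=R^kc^{-\frac k2-1}\Big(cV'(c)-\tfrac k2V(c)\Big)=\tfrac{R^k}{2}c^{-\frac k2-1}\int_{\Sigma\cap\{f=c\}}\frac{|(x-a)^\perp|^2+f^2|b^\top|^2/|b|^4}{|v^\top|}\ge0.$$
The crux is this final simplification: the position field alone leaves the sign-indefinite cross term $\langle(x-a)^\top,b\rangle$, and it is precisely the identity from the constant field $b$ — valid only because $\Sigma$ is minimal — that trades this cross term for the nonnegative $f^2|b^\top|^2/|b|^4$. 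Integrating $\Theta'$ from $g(r)$ to $g(q)$ and converting back to a bulk integral by coarea then produces (\ref{eq: volume monotonicity for reflection}).

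For the weighted formula I would introduce no new vector field. Setting $U(c):=\int_{\Sigma\cap\{f<c\}}|(x-a)^\perp|^2/|x-a|^2$ and using $|(x-a)^\top|^2/|x-a|^2=1-|(x-a)^\perp|^2/|x-a|^2$, one has $I(r)=J(r)-R^kc^{-k/2}U(c)$. Differentiating and reusing the two identities above gives
$$\frac{dI}{dc}=R^kc^{-\frac k2-1}\Big(\frac{c^2}{2|b|^4}\int_{\Sigma\cap\{f=c\}}\frac{|b^\top|^2}{|v^\top|}+\frac k2\,U(c)\Big),$$
again a sum of nonnegative terms. The first integrates by coarea to the annular term $R^k\int f^{-(k-4)/2}|b^\top|^2/(|b|^4|x-a|^2)$ of (\ref{eq: weighted monotonicity for reflection}); the second, after the substitution $c=g(\rho)$ (so that $c^{-\frac k2-1}\,dc=2|b|^2R^{-k}\rho^{-k-1}(|b|^2-\rho^2)^{\frac{k-2}{2}}\,d\rho$ and $U(g(\rho))=\int_{\Sigma\cap\sigma(B^n_\rho)}|(x-a)^\perp|^2/|x-a|^2$), produces exactly the iterated integral in (\ref{eq: weighted monotonicity for reflection}). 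A pleasant feature is that the weight is extracted purely algebraically, so no second-fundamental-form terms ever appear.

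Finally, if either $J$ or $I$ is constant then the integrands above vanish identically, forcing $(x-a)^\perp\equiv0$ and $b^\top\equiv0$ on $\Sigma$: the first makes $\Sigma$ a cone with vertex $a=\sigma(0)$, and the second forces $\langle x,b\rangle$ to be constant, placing $\Sigma$ in a hyperplane orthogonal to $b$; a smooth minimal cone is flat, giving the totally geodesic disk through $a$ orthogonal to $b$. The main obstacle I anticipate is locating the cross-term cancellation — recognizing that the auxiliary constant-field identity is exactly the place where minimality must be used — together with the careful bookkeeping of constants in the substitution $c=g(\rho)$ that assembles the iterated integral.
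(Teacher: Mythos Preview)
Your proposal is correct and follows essentially the same route as the paper's proof. The only cosmetic difference is that the paper packages your two test fields into the single field $X_s = (x-a) - \tfrac{s}{|b|^2}\,b$ and applies the divergence theorem once, which builds the cross--term cancellation in automatically; you instead apply the first variation to $x-a$ and to $b$ separately and combine, which is the same computation unrolled. Likewise, for $I$ the paper differentiates the weighted integral directly and inserts the splitting $|(x-a)^\top|^2 = |x-a|^2 - |(x-a)^\perp|^2$ midway, whereas you write $I = J - R^k c^{-k/2}U$ at the outset; the derivative identities and the change of variables $c = R^2\rho^2/(|b|^2-\rho^2)$ are identical to the paper's.
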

	
We give a proof of the above statement using coarea formula. This formula asserts (see e.g. \cite{Krantz-Parks_Geometric integration theory}) that if $f$ is a Lipschitz function on a manifold $\Sigma$, then for any locally integrable function $g$ on $\Sigma$ and for $s \leq t$, we have
$$\int_{\{s \leq f \leq t\}} g |\nabla^\Sigma f| = \int_{s}^{t} \left(\int_{\{f = \tau\}} g \right) d\tau.$$  

\begin{proof}[Proof of Proposition \ref{Proposition: main result for reflections}] Let $R > 0$, $b \in \Real^n \backslash \{0\}$ and $\sigma$ be the reflection in $S^{n-1}(b,R)$. Suppose that $\Sigma$ is a $k$-dimensional minimal submanifold of $\sigma(B^{n}_{\bar{r}})$ with $\partial \Sigma \subset \partial \sigma(B^{n}_{\bar{r}}) = \sigma(S^{n-1}_{\bar{r}})$ for some $0 < \bar{r} < |b|$. We denote
$$a = \sigma(0) = \frac{|b|^2 - R^2}{|b|^2}b.$$
Furthermore, on the half-space $P(b, R) \coloneqq \{x \in \Real^n: 2|b|^2 - R^2 - 2\langle b,x \rangle > 0\}$, define the function
\begin{equation}\label{eq: def of f in proof of main result}
f(x) = \frac{R^2|\sigma(x)|^2}{|b|^2 - |\sigma(x)|^2} \quad \text{for } x \in P(b,R).
\end{equation}
By Remark \ref{Remark: foliation of image of balls under reflection} and the fact that $\sigma^{-1} = \sigma$, we infer that $|\sigma(x)| < |b|$ for every $x \in P(b,R)$. Thus, $f$ is well-defined. Using \eqref{eq: |sigma(x)|}, we rewrite $f$ as
\begin{align}\label{eq: f rewritten}
f(x) &= \frac{R^2|x-a|^2}{|x-b|^2 - |x-a|^2} \nonumber \\
& = \frac{R^2|x-a|^2}{|b|^2 - |a|^2 - 2 \langle b-a, x \rangle} \nonumber \\
&= \frac{|b|^2|x-a|^2}{2|b|^2 - R^2 - 2\langle b,x \rangle} \quad \text{since } a = \frac{|b|^2 - R^2}{|b|^2}b .
\end{align}
On the other hand, by \eqref{eq: def of f in proof of main result} and using the fact $\sigma^{-1} = \sigma$ again, we have
$$f(x) = \frac{R^2r^2}{|b|^2 - r^2} \quad \text{if and only if } x \in \sigma({S^{n-1}_r}).$$
In other words, if we put $E_s = \{x: f(x) < s\}$, then
\begin{equation}\label{eq: E_s and sigma(Bnr)}
E_s = \sigma(B^n_r) \quad \text{whenever } s = \frac{R^2r^2}{|b|^2 - r^2}.
\end{equation}
To apply coarea formula, we first compute the gradient of $f$ on its level sets. We use \eqref{eq: f rewritten} to get
$$\log f = \log |b|^2 + \log |x-a|^2 - \log(2|b|^2 - R^2 - 2\langle b,x \rangle).$$
It follows that
\begin{equation*}
\frac{\nabla f}{f} = \frac{2(x-a)}{|x-a|^2} + \frac{2b}{2|b|^2 - R^2 -  2\langle b,x \rangle} = \frac{2}{|x-a|^2} \left( x - a + f(x) \frac{b}{|b|^2}\right).
\end{equation*}
So we have
\begin{equation*}
\nabla^\Sigma f = \frac{2s}{|x-a|^2} \left((x - a)^\top + s\frac{b^\top}{|b|^2}\right) \quad \text{on } \Sigma \cap \partial E_s.
\end{equation*}
The following argument is similar to one in \cite{Zhu_JFA2018}. For a fixed number $s > 0$, we consider the vector field
$$X_s = x - a - \frac{s}{|b|^2}b.$$
We have
\begin{equation}\label{eq: X_s with  Df on level sets}
\langle X_s, \nabla^\Sigma f \rangle = \frac{2s}{|x-a|^2} \left(|(x-a)^\top|^2 - \frac{s^2|b^\top|^2}{|b|^4}\right) \quad \text{on } \Sigma \cap \partial E_s.
\end{equation}
In addition, since $\div_\Sigma X_s = k$ and $\frac{\nabla^\Sigma f}{|\nabla^\Sigma f|}$ is the outward-pointing unit normal vector field to the region $\Sigma \cap E_s$ as a subset of $\Sigma$, we apply divergence theorem and \eqref{eq: X_s with  Df on level sets} to get
\begin{align}
|\Sigma \cap E_s| &= \frac{1}{k} \int_{\Sigma \cap E_s} \div_{\Sigma} X_s \nonumber \\
&= \frac{1}{k} \int_{\Sigma \cap \partial E_s}  \frac{\langle X_s, \nabla^\Sigma f\rangle}{|\nabla^\Sigma f|} \nonumber \\
&= \frac{2s}{k} \int_{\Sigma \cap \partial E_s} \frac{1}{|\nabla^\Sigma f| |x-a|^2} \left(|(x-a)^\top|^2 - \frac{s^2|b^\top|^2}{|b|^4}\right). \label{eq: |Sigma cap E_s| by coarea}
\end{align}
For $r \in (0, \bar{r})$, define
\begin{equation*}	J(r) = \left(\frac{|b|^2 - r^2}{r^2}\right)^\frac{k}{2} |\Sigma \cap \sigma(B^n_r)| \;\; \text{and} \;\; I(r) = \left(\frac{|b|^2 - r^2}{r^2}\right)^\frac{k}{2} \int_{\Sigma \cap \sigma(B^n_r)} \frac{|(x-a)^\top|^2}{|x-a|^2}.
\end{equation*}

We first prove monotonicity of $J$. To do this, we apply coarea formula to get
\begin{equation*}
|\Sigma \cap E_s| = \int_{0}^{s} \left( \int_{\Sigma \cap \partial E_\tau} \frac{1}{|\nabla^\Sigma f|} \right) d\tau.
\end{equation*}
Using this, we compute
\begin{align*}
\frac{d}{ds} \left( s^{-\frac{k}{2}}|\Sigma \cap E_s|\right) &= s^{-\frac{k}{2}} \frac{d}{ds}|\Sigma \cap E_s| - \frac{k}{2} s^{-\frac{k+2}{2}}|\Sigma \cap E_s|\\
&= s^{-\frac{k+2}{2}} \left(\int_{\Sigma \cap \partial E_s} \frac{s}{|\nabla^\Sigma f|} - \frac{k}{2} |\Sigma \cap E_s| \right).
\end{align*}
Consequently, we recall \eqref{eq: |Sigma cap E_s| by coarea} to obtain
\begin{equation*}
\frac{d}{ds} \left( s^{-\frac{k}{2}}|\Sigma \cap E_s|\right) = s^{-\frac{k}{2}} \int_{\Sigma \cap \partial E_s} \frac{1}{|\nabla^\Sigma f||x-a|^2}\left(|(x-a)^\perp|^2 + \frac{s^2|b^\top|^2}{|b|^4}\right).
\end{equation*}
Integrating the above equation and applying coarea formula again, we deduce
$$t^{-\frac{k}{2}}|\Sigma \cap E_t| - s^{-\frac{k}{2}}|\Sigma \cap E_s| = \int_{\Sigma \cap E_t \backslash E_s} \frac{f^{-\frac{k}{2}}}{|x-a|^2}\left(|(x-a)^\perp|^2 + \frac{f^2|b^\top|^2}{|b|^4}\right).$$
For $0 < r < q < \bar{r}$, we put $t = \frac{R^2q^2}{|b|^2 - q^2}$ and $s = \frac{R^2r^2}{|b|^2 - r^2}$. Then, we combine \eqref{eq: E_s and sigma(Bnr)} and the previous equation to conclude that
\begin{align}\label{eq: proof of monotonicity for J}
J(q) - J(r) &= R^k \left(t^{-\frac{k}{2}}|\Sigma \cap E_t| - s^{-\frac{k}{2}}|\Sigma \cap E_s| \right) \nonumber \\
&= R^k \int_{\Sigma \cap E_t \backslash E_s} \frac{f^{-\frac{k}{2}}}{|x-a|^2}\left(|(x-a)^\perp|^2 + \frac{f^2|b^\top|^2}{|b|^4}\right) \nonumber \\
&= R^k \int_{\Sigma \cap \sigma(B^n_q) \backslash \sigma(B^n_r)} \frac{f^{-\frac{k}{2}}}{|x-a|^2}\left(|(x-a)^\perp|^2 + \frac{f^2|b^\top|^2}{|b|^4}\right).
\end{align}

Now, we prove monotonicity of $I$. By coarea formula, we have
$$\int_{\Sigma \cap E_s} \frac{|(x-a)^\top|^2}{|x-a|^2} = \int_{0}^{s} \left( \int_{\Sigma \cap \partial E_\tau} \frac{|(x-a)^\top|^2}{|\nabla^\Sigma f||x-a|^2} \right) d\tau.$$
Using this, we compute
\begin{align*}
&\frac{d}{ds} \left(s^{-\frac{k}{2}} \int_{\Sigma \cap E_s} \frac{|(x-a)^\top|^2}{|x-a|^2} \right)\\
&= s^{-\frac{k}{2}} \frac{d}{ds}  \int_{\Sigma \cap E_s} \frac{|(x-a)^\top|^2}{|x-a|^2} - \frac{k}{2}s^{-\frac{k+2}{2}} \int_{\Sigma \cap E_s} \frac{|(x-a)^\top|^2}{|x-a|^2}\\
&= s^{-\frac{k+2}{2}} \left( \int_{\Sigma \cap \partial E_s} \frac{s|(x-a)^\top|^2}{|\nabla^\Sigma f||x-a|^2} - \frac{k}{2} \int_{\Sigma \cap E_s} \frac{|(x-a)^\top|^2}{|x-a|^2} \right)\\
&= s^{-\frac{k+2}{2}} \left( \int_{\Sigma \cap \partial E_s} \frac{s|(x-a)^\top|^2}{|\nabla^\Sigma f||x-a|^2} - \frac{k}{2}|\Sigma \cap E_s| + \frac{k}{2} \int_{\Sigma \cap E_s} \frac{|(x-a)^\perp|^2}{|x-a|^2} \right).
\end{align*}
Consequently, we recall \eqref{eq: |Sigma cap E_s| by coarea} to obtain
\begin{align*}
&\frac{d}{ds} \left(s^{-\frac{k}{2}} \int_{\Sigma \cap E_s} \frac{|(x-a)^\top|^2}{|x-a|^2} \right)\\
&\qquad = s^{-\frac{k-4}{2}} \int_{\Sigma \cap \partial E_s} \frac{|b^\top|^2}{|\nabla^\Sigma f||b|^4|x-a|^2} + \frac{k}{2}s^{-\frac{k+2}{2}} \int_{\Sigma \cap E_s} \frac{|(x-a)^\perp|^2}{|x-a|^2}.
\end{align*}
Integrating the above equation and applying coarea formula again, we find that
\begin{align*}
&t^{-\frac{k}{2}} \int_{\Sigma \cap E_t} \frac{|(x-a)^\top|^2}{|x-a|^2} - s^{-\frac{k}{2}} \int_{\Sigma \cap E_s} \frac{|(x-a)^\top|^2}{|x-a|^2}\\
&\qquad = \int_{\Sigma \cap E_t \backslash E_s} f^{-\frac{k-4}{2}} \frac{|b^\top|^2}{|b|^4|x-a|^2} + \frac{k}{2} \int_{s}^{t} \left( \tau^{-\frac{k+2}{2}} \int_{\Sigma \cap E_\tau} \frac{|(x-a)^\perp|^2}{|x-a|^2} \right) d\tau.
\end{align*}
From here, we apply change of variable $\tau = \frac{R^2\rho^2}{|b|^2 - \rho^2}$ to the last integral and use an argument similar to \eqref{eq: proof of monotonicity for J} to deduce \eqref{eq: weighted monotonicity for reflection}. 

Suppose either $I$ or $J$ is constant on $(0, \bar{r})$. Then we must have $(x-a)^\perp = 0$ and $b^\top = 0$ for every $x \in \Sigma$. This implies that $\Sigma$ is a totally geodesic disk containing $a$ and orthogonal to $b$.
\end{proof}

Similarly to \cite{Zhu_JFA2018}, we also have another proof of \eqref{eq: volume monotonicity for reflection} by applying divergence theorem to a special vector field. See also \cite{Brendle_GAFA2012,Freidin-McGrath_JFA2019,Freidin-McGrath_IMRN2020} where the method was applied to prove lower bound for volume of minimal submanifolds with free boundaries.

\begin{proof}[Alternative proof of \eqref{eq: volume monotonicity for reflection}] After starting the proof in the same way as in the first proof, we consider the vector field $W$ (similarly to \cite{Brendle-Hung_GAFA2017, Zhu_JFA2018}) on $\Sigma$ given by
$$ W(x) = \frac{1}{k} f(x)^{-\frac{k}{2}}(x-a) - F(f(x)) \frac{b}{|b|^2} \quad \text{for } x \in \Sigma,$$
where
$$F(t) =\left\{ \begin{array}{ll}
	\frac{1}{k-2} t^{-\frac{k-2}{2}} & \quad \text{if } k \geq 3, \smallskip \\
	-\frac{1}{2} \log t &\quad \text{if } k = 2.
	\end{array} \right.$$
Assuming $k \geq 3$, the explicit form of $W$ is
\begin{align*}
&W(x) = \frac{1}{k}  \left(\frac{2|b|^2 - R^2 - 2\langle b,x \rangle}{|b|^2|x-a|^2}\right)^{\frac{k}{2}}(x-a)\\
&\qquad \qquad \qquad \qquad \qquad - \frac{1}{k-2}\left(\frac{2|b|^2 - R^2 - 2\langle b,x \rangle}{|b|^2|x-a|^2}\right)^{\frac{k-2}{2}} \frac{b}{|b|^2}.
\end{align*}
We compute its divergence by
\begin{align*}
\div_\Sigma W(x) = &\left(\frac{2|b|^2 - R^2 - 2\langle b,x \rangle}{|b|^2|x-a|^2}\right)^{\frac{k}{2}} -  \left(\frac{2|b|^2 - R^2 - 2\langle b,x \rangle}{|b|^2|x-a|^2}\right)^{\frac{k-2}{2}} \frac{\langle (x-a)^\top, b^\top \rangle}{|b|^2|x-a|^2}\\
&-  \left(\frac{2|b|^2 - R^2 - 2\langle b,x \rangle}{|b|^2|x-a|^2}\right)^{\frac{k}{2}} \frac{|(x-a)^\top|^2}{|x-a|^2}\\
&+ \left(\frac{2|b|^2 - R^2 - 2\langle b,x \rangle}{|b|^2|x-a|^2}\right)^{\frac{k-4}{2}} \frac{|b^\top|^2}{|b|^4|x-a|^2}\\
&+  \left(\frac{2|b|^2 - R^2 - 2\langle b,x \rangle}{|b|^2|x-a|^2}\right)^{\frac{k-2}{2}} \frac{\langle (x-a)^\top, b^\top \rangle}{|b|^2|x-a|^2}.
\end{align*}
After cancelling repeated terms, we obtain the following simplified form
$$\div_\Sigma W(x) = f(x)^{-\frac{k}{2}} \frac{|(x-a)^\perp|^2}{|x-a|^2} + f(x)^{-\frac{k-4}{2}} \frac{|b^\top|^2}{|b|^4|x-a|^2}.$$
The previous computation also holds when $k = 2$. On the other hand, since $\partial E_s = \{f = s\}$, we apply divergence theorem together with the facts that $\div_\Sigma (x-a) = k$ and $\div_\Sigma b = 0$ to obtain
$$\int_{\Sigma \cap \partial E_s} \langle W, \nu \rangle = \int_{\Sigma \cap \partial E_s} \frac{1}{k} s^{-\frac{k}{2}} \langle x-a, \nu \rangle - \frac{F(s)}{|b|^2}\langle b, \nu \rangle = s^{-\frac{k}{2}}|\Sigma \cap E_s|,$$
where $\nu \coloneqq \frac{\nabla^\Sigma f}{|\nabla^\Sigma f|}$. Putting these together, we apply divergence theorem again to conclude that
\begin{align*}
t^{-\frac{k}{2}}|\Sigma \cap E_t| - s^{-\frac{k}{2}}|\Sigma \cap E_s| &= \int_{\Sigma \cap \partial E_t} \langle W, \nu \rangle - \int_{\Sigma \cap \partial E_s} \langle W, \nu \rangle\\
&= \int_{\Sigma \cap E_t \backslash E_s} \div_\Sigma W\\
&= \int_{\Sigma \cap E_t \backslash E_s}  f^{-\frac{k}{2}} \frac{|(x-a)^\perp|^2}{|x-a|^2} + f^{-\frac{k-4}{2}} \frac{|b^\top|^2}{|b|^4|x-a|^2}.
\end{align*}
We follow \eqref{eq: proof of monotonicity for J} to finish the proof.
\end{proof}

\subsection{General case}
In this subsection, we prove Theorem \ref{Theorem: main result}.
\begin{proof}[Proof of Theorem \ref{Theorem: main result}]
Let $\phi$ be a M\"obius transformation on $\overline{\Real^n}$ such that $b \coloneqq \phi^{-1}(\infty) \in \Real^n \backslash \{0\}$. According to Theorem \ref{Theorem: characterize Mobius}, we let $S^{n-1}(b, R)$ be the isometric sphere of $\phi$, $\sigma$ be the reflection in $S^{n-1}(b,R)$, and $\psi$ be the Euclidean isometry such that $\phi = \psi\sigma$. Denote $a = \sigma(0)$.

Suppose that $\Sigma$ is a $k$-dimensional minimal submanifold of $\phi(B^n_{\bar{r}})$ for some $0 < \bar{r} < |b|$ with $\partial\Sigma \subset \partial\phi(B^n_{\bar{r}})$. We first prove monotonicity of $J$, the function on $(0, \bar{r})$ defined by
$$J(r) = \left(\frac{|b|^2 - r^2}{r^2}\right)^\frac{k}{2} |\Sigma \cap \phi(B^n_r)|.$$
Since $\psi$ is an isometry on $\Real^n$, we infer that $\psi^{-1}(\Sigma)$ is a minimal submanifold of $\sigma(B^n_{\bar{r}})$ with $\partial\psi^{-1}(\Sigma) \subset \partial\sigma(B^n_{\bar{r}})$. Furthermore, we rewrite $J$ as
$$J(r) = \left(\frac{|b|^2 - r^2}{r^2}\right)^\frac{k}{2} | \psi^{-1}(\Sigma) \cap \sigma(B^n_r)|.$$
Applying Proposition \ref{Proposition: main result for reflections} to $\psi^{-1}(\Sigma)$, for $0 < r < q < \bar{r}$, we have
\begin{align}\label{eq: monotone of psi(Sigma)}
J(q) - J(r) = R^k \int_{\psi^{-1}(\Sigma) \cap \sigma(B^n_q) \backslash \sigma(B^n_r)} h^{-\frac{k}{2}} \left( \frac{|b|^4|(z-a)^\perp|^2 + h^2|b^\top|^2}{|b|^4|z-a|^2}\right),
\end{align}
where $h$ is the function defined by
$$h(z) \coloneqq \frac{R^2|\sigma(z)|^2}{|b|^2 - |\sigma(z)|^2} \quad \text{for } z \in \sigma(B^n_{\bar{r}}).$$
In addition, as a Euclidean isometry, $\psi = A + \psi(0)$ for some orthogonal matrix $A$. Thus, we apply change of variables $x = \psi(z)$ to \eqref{eq: monotone of psi(Sigma)} to deduce
\begin{align}\label{eq: proof of monotonicity for J general case}
J(q) - J(r) = R^k \int_{\Sigma \cap \phi(B^n_q) \backslash \phi(B^n_r)} f^{-\frac{k}{2}} \left( \frac{|b|^4|(x-\psi(a))^\perp|^2 + f^2|(\psi(b) - \psi(0))^\top|^2}{|b|^4|x-\psi(a)|^2}\right),
\end{align}
where $f$ is the function given by
$$f(x) \coloneqq h(\psi^{-1}(x)) =  \frac{R^2|\phi^{-1}(x)|^2}{|b|^2 - |\phi^{-1}(x)|^2} \quad \text{for } x \in \phi(B^n_{\bar{r}}).$$
Noting that $\psi(a) = \phi(0)$, $\psi(b) = \phi(\infty)$, and $\psi(0) = \phi(a)$, we derive (\ref{eq: main result volume monotonicity}) from (\ref{eq: proof of monotonicity for J general case}). The proof of (\ref{eq: main result weighted monotonicity}) is similar.
\end{proof}

\section*{Acknowledgements}	
The author was supported by National Key R\&D Program of China 2020YFA0712800. We would like to thank the anonymous referees for their helpful comments and suggestions.

\begin{bibdiv}
\begin{biblist}
		
\bib{Alexander-Osserman_1975}{article}{
	author={Alexander, H.},
	author={Osserman, R.},
	title={Area bounds for various classes of surfaces},
	journal={Amer. J. Math.},
	volume={97},
	date={1975},
	number={3},
	pages={753--769},
}

\bib{Beardon_Geometry of Discrete}{book}{
	author={Beardon, Alan},
	title={The geometry of discrete groups},
	series={Graduate Texts in Mathematics},
	volume={91},
	publisher={Springer-Verlag, New York},
	date={1983},
}

\bib{Brendle_GAFA2012}{article}{
	author={Brendle, Simon},
	title={A sharp bound for the area of minimal surfaces in the unit ball},
	journal={Geom. Funct. Anal.},
	volume={22},
	date={2012},
	number={3},
	pages={621--626}
}
		
\bib{Brendle-Hung_GAFA2017}{article}{
	author={Brendle, Simon},
	author={Hung, Pei-Ken},
	title={Area bounds for minimal surfaces that pass through a prescribed point in a ball},
	journal={Geom. Funct. Anal.},
	volume={27},
	date={2017},
	number={2},
	pages={235--239},
}

\bib{Freidin-McGrath_JFA2019}{article}{
	author={Freidin, Brian},
	author={McGrath, Peter},
	title={Area bounds for free boundary minimal surfaces in a geodesic ball
		in the sphere},
	journal={J. Funct. Anal.},
	volume={277},
	date={2019},
	number={11},
	pages={108276, 19}
}

\bib{Freidin-McGrath_IMRN2020}{article}{
	author={Freidin, Brian},
	author={McGrath, Peter},
	title={Sharp area bounds for free boundary minimal surfaces in
		conformally Euclidean balls},
	journal={Int. Math. Res. Not. IMRN},
	date={2020},
	number={18},
	pages={5630--5641}
}

\bib{Krantz-Parks_Geometric integration theory}{book}{
	author={Krantz, Steven},
	author={Parks, Harold},
	title={Geometric integration theory},
	series={Cornerstones},
	publisher={Birkh\"auser Boston, Inc., Boston, MA},
	date={2008}
}

\bib{Naff-Zhu_Prescribed point area estimate_arXiv2022}{article}{
	title={The prescribed point area estimate for minimal submanifolds in constant curvature}, 
	author={Naff, Keaton}
	author={Zhu, Jonathan}
	journal={arXiv preprint arXiv:2206.08302}
	date={2022}
}	

\bib{Naff-Zhu_Monotonicity for minimal in constant cruvature_arXiv2022}{article}{
	title={Moving monotonicity formulae for minimal submanifolds in constant curvature}, 
	author={Naff, Keaton}
	author={Zhu, Jonathan}
	journal={arXiv preprint arXiv:2210.03263}
	date={2022}
}

\bib{Simon_Lectures on geometric measure theory}{book}{
	author={Simon, Leon},
	title={Lectures on geometric measure theory},
	series={Proceedings of the Centre for Mathematical Analysis, Australian
		National University},
	volume={3},
	publisher={Australian National University, Centre for Mathematical
		Analysis, Canberra},
	date={1983}
}

\bib{Stoll_Harmonic and subharmonic function theory}{book}{
	author={Stoll, Manfred},
	title={Harmonic and subharmonic function theory on the hyperbolic ball},
	series={London Mathematical Society Lecture Note Series},
	volume={431},
	publisher={Cambridge University Press, Cambridge},
	date={2016}
}

\bib{Zhu_JFA2018}{article}{
	author={Zhu, Jonathan}
	title={Moving-centre monotonicity formulae for minimal submanifolds and related equations}
	journal={J. Funct. Anal.}
	volume={274}
	number={5}
	date={2018}
	pages={1530--1552}
}

\end{biblist}
\end{bibdiv}
\end{document}